\documentclass[11pt,reqno]{amsart}

\usepackage[english]{babel}
\usepackage{amsmath,amssymb,amsthm}
\usepackage{bbm}
\usepackage{calrsfs}
\usepackage{mathrsfs}

\usepackage[pdftex]{color}
\usepackage[dvipsnames]{xcolor}
\usepackage[bookmarks=true,hyperindex,pdftex,colorlinks,
citecolor=Black,linkcolor=Black, urlcolor=Black
]{hyperref}



\usepackage{ulem} \normalem

\numberwithin{equation}{section}

\theoremstyle{plain}
\newtheorem{theorem}{Theorem}[section]
\newtheorem{theo}[theorem]{Theorem}
\newtheorem{cor}[theorem]{Corollary}
\newtheorem{prop}[theorem]{Proposition}

\theoremstyle{definition}

 \DeclareMathOperator{\supp}{supp}

 \DeclareMathOperator{\ran}{ran}
 \DeclareMathOperator{\diam}{diam}

\newcommand{\R}{\mathbb{R}}
\newcommand{\N}{\mathbb{N}}

\newcommand{\calK}{\mathscr{K}}
\newcommand{\calL}{\mathscr{L}}

\newcommand{\dopu}{{:}\allowbreak\ }
\newcommand{\eps}{\varepsilon}
\newcommand{\Id}{\mathrm{Id}}
\newcommand{\iy}{\infty}

\newcommand{\Lip}{\mathrm{Lip}}
\newcommand{\lip}{\mathrm{lip}}
\newcommand{\LipM}{\mathrm{Lip}_0(M)}
\newcommand{\lipM}{\mathrm{lip}_0(M)}

\def\bea{\begin{align*}}
\def\beq{\begin{equation}}
\def\eeq{\end{equation}}
\newcommand{\begsta}{\begin{statements}}
\def\endsta{\end{statements}}


\newcounter{abc}

\newenvironment{statements}%
{\setcounter{abc}{0}
\begin{list}%
{{\rm (\alph{abc})}}
{\usecounter{abc}
\parsep=0pt plus 1pt
\topsep=1pt plus 2pt minus 1pt
\itemsep=1pt plus 2pt minus 1pt
\leftmargin=3\baselineskip
\labelsep=.6\baselineskip
\labelwidth=2.4\baselineskip
\rightmargin 0pt}%
}%
{\end{list}}


%
%

\newcommand{\norm}{\mbox{$\|\:.\:\|$}}

\newcommand{\rest}[2]{#1\raisebox{-0.3ex}{\mbox{$\mid_{#2}$}}} 

\hyphenation{lip-schitz}

\makeatletter
\@namedef{subjclassname@2020}{%
  \textup{2020} Mathematics Subject Classification}
\makeatother

\begin{document}

\title{The Perfekt theory of $M$-ideals}

\author{Dirk Werner}

\address{Department of Mathematics \\ Freie Universit\"at Berlin \\
Arnimallee~6 \\ D-14195~Berlin \\ Germany\newline
\href{http://orcid.org/0000-0003-0386-9652}{ORCID: \texttt{0000-0003-0386-9652}}
}
\email{werner@math.fu-berlin.de}

\subjclass[2020]{Primary 46B04; Secondary 46B10}

\keywords{$M$-ideals; bidual spaces; Lipschitz spaces; $M$-embedded spaces}

\begin{abstract}
We revisit some ideas of K.-M.~Perfekt who has provided an elegant framework to detect the biduality between function or sequence spaces defined in terms of some $o$- resp.\ $O$-condition. We present new proofs under somewhat weaker assumptions than before and apply the result to Lipschitz spaces. 
\end{abstract}

\date{6.5.21; version v1}

\dedicatory{Dedicated to Ehrhard Behrends, teacher, mentor and friend, \\
on the occasion of his 75th birthday.}

\maketitle
\thispagestyle{empty} 


\section{Introduction}

It has long been known that a number of pairs $(E_0, E)$ of function or sequence spaces defined in terms of a ``little $o$''- resp.\ ``big $O$''-condition provide examples of spaces in biduality, i.e., $E_0^{**} =E$ with the identical inclusion $E_0 \hookrightarrow E$ corresponding to the canonical embedding $\beta_{E_0}$ of $E_0$ into its bidual. In addition it turns out that often $E_0$ is an $M$-ideal in $E$ (the definition will be recalled shortly); see for example \cite{W92}. The best known and simplest example of this kind is the pair $(c_0, \ell_\infty)$; another example is the pair $(B_0, B)$ consisting of the ``little'' Bloch space and the usual Bloch space. 

In \cite{P1} and \cite{P2}, K.-M. Perfekt provided an elegant general framework to accommodate many examples of this phenomenon including the pair (VMO, BMO). Other examples and applications can be found in \cite{OGPSS}. In this note we shall revisit his construction, detailed in Section~\ref{sec3}, and will give new proofs under somewhat less restrictive assumptions. We then apply this framework to Lipschitz spaces over general compact pointed metric spaces, which was left out in \cite{P1} where only compact subsets of $\R^n$ were considered. 

Let us finish this section by recalling the notion of an $M$-ideal introduced by E.M.~Alfsen and E.G.~Effros in their seminal paper \cite{AE}. Let $E$ be a Banach space and $E_0$ a closed subspace. Then $E_0$ is called an $M$-ideal if there is a projection $P\dopu E^*\to E^*$ with $\ker P= E_0^\bot$, the annihilator of $E_0$ in $E^*$, such that 
\beq\label{eq1.1}
\|x^*\| = \|Px^*\| + \|x^*-Px^*\| \qquad \text{for all }x^*\in E^*.
\eeq
Detailed information on $M$-ideals can be found in E.~Behrends's monograph \cite{Beh} and in~\cite{HWW}. 

A very important special situation is when $E=E_0^{**}$; in this case $E_0$ is called $M$-embedded; see Chapter~III in \cite{HWW}. One can show that $E_0$ is $M$-embedded if and only if it is the restriction map $P\dopu x^{***} \mapsto \rest{x^{***}}{E_0}$ that satisfies (\ref{eq1.1}). 

The key result of part~I of the Alfsen-Effros paper is a characterisation of the $M$-ideal property in purely geometric terms, by means of an intersection property of balls. We shall employ the following version, originally due to \AA.~Lima; cf.\ \cite[Th.~I.2.2]{HWW}. The closed unit ball of a Banach space $E$ is denoted by~$B_E$.

\begin{theo}\label{1.1}
$E_0$ is an $M$-ideal in $E$ if and only if the following 3-ball property holds: For all $x\in B_E$, $y_1,y_2,y_3\in B_{E_0}$ and $\eps>0$ there is some $y\in E_0$ such that 
\beq\label{eq1.2}
\|x+y_i-y\|\le 1+\eps \qquad (i=1,2,3).
\eeq
\end{theo}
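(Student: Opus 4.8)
The plan is to prove the two implications of the equivalence separately, reading the $M$-ideal property of (\ref{eq1.1}) as an $L$-decomposition $E^*=E_0^\perp\oplus_1\ran P$ in which the norm splits additively and $P$ kills $\ker P=E_0^\perp$. Throughout I would work in $\ell_\infty^n(E)$, whose norm is $\max_i\|\cdot\|$ and whose dual is $\ell_1^n(E^*)$, so that the simultaneous ball conditions in (\ref{eq1.2}) become a single norm condition on a tuple.

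For the implication ``$M$-ideal $\Rightarrow$ 3-ball property'' I would argue by separation. Fix $x\in B_E$, $y_1,y_2,y_3\in B_{E_0}$ and $\eps>0$, and suppose toward a contradiction that $\max_i\|x+y_i-y\|>1+\eps$ for every $y\in E_0$. Then $0$ lies outside the open convex set $C+(1+\eps)\,\mathrm{int}\,B_{\ell_\infty^3(E)}$, where $C=\{(x+y_i-y)_i:y\in E_0\}$ is an affine subspace, so Hahn--Banach yields a nonzero $(x_1^*,x_2^*,x_3^*)\in\ell_1^3(E^*)$ with $\re\sum_i\langle x_i^*,x+y_i\rangle\ge(1+\eps)\sum_i\|x_i^*\|$; boundedness along $C$ forces $\sum_i x_i^*\in E_0^\perp$. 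Now I would insert the $L$-structure: writing $x_i^*=Px_i^*+(x_i^*-Px_i^*)$ with $x_i^*-Px_i^*\in E_0^\perp$, the relation $\sum_iPx_i^*=P(\sum_ix_i^*)=0$ gives $\sum_ix_i^*=\sum_i(x_i^*-Px_i^*)\in E_0^\perp$, while $\langle x_i^*-Px_i^*,y_i\rangle=0$ since $y_i\in E_0$. Hence $\re\sum_i\langle x_i^*,x+y_i\rangle\le\sum_i\|x_i^*-Px_i^*\|+\sum_i\|Px_i^*\|=\sum_i\|x_i^*\|$ by (\ref{eq1.1}), contradicting $1+\eps>1$ as $\sum_i\|x_i^*\|>0$. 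The same computation in fact delivers the $n$-ball property for every $n$, of which $n=3$ is the assertion.

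For the converse ``3-ball property $\Rightarrow$ $M$-ideal'' I would proceed in two stages. First, I would bootstrap the 3-ball property to the $n$-ball property for all $n$ (Lima's reduction): given $x,y_1,\dots,y_n,\eps$, one builds $y\in E_0$ by successive approximation, at each step invoking the 3-ball property on a suitable triple of running centres and choosing geometrically decreasing tolerances, so that the corrections sum to a controlled displacement in $E_0$ and all $n$ inequalities survive in the limit. Second, from the $n$-ball property I would recover the $L$-projection following Alfsen--Effros: the property is a ball-intersection statement --- finitely many balls of $E$ possessing a common point and each meeting $E_0$ have their joint $\eps$-enlargement meet $E_0$ --- and from it one extracts, for each $x^*\in E^*$, a canonical splitting $x^*=p+q$ with $p\in E_0^\perp$ and $\|x^*\|=\|p\|+\|q\|$; the projection $P\dopu x^*\mapsto q$ then has kernel $E_0^\perp$ and satisfies (\ref{eq1.1}).

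The main obstacle is the first stage of the converse, Lima's reduction from three balls to $n$ balls. Its difficulty is quantitative rather than conceptual: the 3-ball property only relocates a centre against three constraints at once, so the successive corrections and their tolerances must be arranged so carefully that the auxiliary sequence converges in $E_0$ and no inequality already achieved is spoiled when the next centre is brought under control. Once the $n$-ball property is in hand, producing $P$ and verifying the additive identity (\ref{eq1.1}) is the by-now-classical Alfsen--Effros argument and presents no essential difficulty.
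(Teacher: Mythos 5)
The paper does not actually prove Theorem~\ref{1.1}: it is quoted as a known result of \AA.~Lima with a pointer to \cite[Th.~I.2.2]{HWW}, so there is no in-paper argument to compare yours against. Judged on its own terms, your proposal splits into two very unequal halves. The forward implication is essentially a complete and correct proof: separating $0$ from the open convex set $C+(1+\eps)\,\mathrm{int}\,B_{\ell_\infty^3(E)}$, noting that boundedness of the separating functional on the affine set $C$ forces $\sum_i x_i^*\in E_0^\perp$, and then splitting each $x_i^*$ along the $L$-decomposition is the standard duality argument. The one computation you leave implicit is the place where $\sum_i Px_i^*=0$ is actually used, namely $\sum_i\langle Px_i^*,x\rangle=\langle\sum_i Px_i^*,x\rangle=0$; combined with $\langle x_i^*-Px_i^*,y_i\rangle=0$, $\|x\|\le 1$ and $\|y_i\|\le 1$ this gives the bound $\sum_i\|Px_i^*\|+\sum_i\|x_i^*-Px_i^*\|=\sum_i\|x_i^*\|$ and the desired contradiction. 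You are also right that nothing here depends on $n=3$.

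The converse is where the entire substance of the theorem lies, and your proposal only names the two known reductions rather than carrying either one out. Lima's bootstrap from three balls to $n$ balls is a genuinely delicate iteration (and in \cite{HWW} the implication chain is in fact organised differently, passing through restricted $2$- and $3$-ball properties rather than through the unrestricted $n$-ball property), while the extraction of the $L$-projection from the ball-intersection statement is the core of the Alfsen--Effros theory: one must prove that the norm-additive splitting $x^*=p+q$ with $p\in E_0^\perp$ exists, is \emph{unique}, and depends \emph{linearly} on $x^*$, none of which is automatic. As written, ``presents no essential difficulty'' stands in for several pages of \cite{HWW}. So your text is a correct proof of one implication together with an accurate roadmap for the other; if the theorem is to be proved rather than cited (and the paper itself merely cites it), those two deferred steps are precisely what would still have to be supplied.
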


This is a very versatile tool to prove the $M$-ideal property since no prior information on the dual space is involved.


\section{$M$-ideals in subspaces of $C^b(L,Y)$.}
\label{sec2}

Perfekt's approach naturally leads to subspaces of $C^b(L,Y)$, the space of bounded continuous functions on a locally compact Hausdorff space $L$ with values in a Banach space~$Y$. 

We have the following result; $C_0(L,Y)$ stands for the space of continuous functions vanishing at infinity, i.e., a continuous function $f$ is in $C_0(L,Y)$ if and only if $\{t\in L\dopu \|f(t)\|_Y\ge\eps\}$ is compact for each $\eps>0$.

\begin{theo}\label{2.1}
Let $L$ be a locally compact Hausdorff space, $Y$ be a Banach space, let $E\subset C^b(L,Y)$ be a closed subspace and $E_0=E\cap C_0(L,Y)$. Assume that $B_{E_0}$ is dense in $B_E$ for the topology of uniform convergence on compact subsets of~$L$. Then $E_0$ is an $M$-ideal in~$E$.
\end{theo}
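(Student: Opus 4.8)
The plan is to verify the 3-ball property from Theorem~\ref{1.1} directly, using the density of $B_{E_0}$ in $B_E$ for uniform convergence on compact sets together with the fact that functions in $C_0(L,Y)$ are small off a compact set. So fix $x\in B_E$, functions $y_1,y_2,y_3\in B_{E_0}$, and $\eps>0$; I must produce $y\in E_0$ with $\|x+y_i-y\|\le 1+\eps$ for $i=1,2,3$.

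First I would localize the problem to a compact set. Since each $y_i\in C_0(L,Y)$, the set $K=\bigcup_{i=1}^3\{t\in L\dopu \|y_i(t)\|_Y\ge\eps/2\}$ is compact. Off $K$, all three $y_i$ are uniformly small, so on $L\setminus K$ the quantity $\|x(t)+y_i(t)-y(t)\|_Y$ is controlled by $\|x(t)-y(t)\|_Y$ up to $\eps/2$; the task on the complement of $K$ is therefore essentially to keep $\|x-y\|$ under control, while on $K$ I need $y$ to track $x$ so that $\|x+y_i-y\|$ stays near $\|y_i\|\le 1$.

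The key step is to choose $y$ using the density hypothesis. By assumption there is $y\in B_{E_0}$ that approximates $x$ uniformly on the compact set $K$, so that $\|x(t)-y(t)\|_Y<\eps/2$ for all $t\in K$. For $t\in K$ this gives $\|x(t)+y_i(t)-y(t)\|_Y\le \|y_i(t)\|_Y+\eps/2\le 1+\eps/2$. For $t\notin K$ we have $\|y_i(t)\|_Y<\eps/2$, and since both $x$ and $y$ lie in $B_E$ (so $\|x(t)\|_Y\le 1$ and $\|y(t)\|_Y\le 1$) a naive bound $\|x(t)+y_i(t)-y(t)\|_Y\le 1+1+\eps/2$ is useless. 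This is the main obstacle: off $K$ I cannot simply discard the $-y(t)$ term.

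To overcome it I would refine the choice of $y$ so that $y$ is not just close to $x$ on $K$ but also stays close to $x$ globally in the relevant sense, i.e.\ I want $\|x(t)-y(t)\|_Y$ small everywhere, not just on $K$. The natural way is to note that the density is for the compact-open topology, so I should enlarge $K$ to a compact set $\tilde K\supset K$ on which $x$ itself is essentially concentrated; but $x$ need not vanish at infinity. The correct mechanism, which I expect the author to exploit, is that the difference $x-y$ with $y\in E_0$ can be arranged to have norm at most $1$ off a large compact set while being small on $K$: concretely, one chooses $y\in B_{E_0}$ close to $x$ on $K$, and then on $L\setminus K$ one estimates $\|x(t)+y_i(t)-y(t)\|_Y$ by writing it as $\|x(t)-y(t)\|_Y+\eps/2$ and bounds $\|x(t)-y(t)\|_Y$ using that $x\in B_E$ forces, through the structure of $E$ and the approximating net, a bound approaching $\|x(t)\|_Y\le 1$. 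Thus the heart of the argument is showing that a single $y\in B_{E_0}$ can simultaneously approximate $x$ on $K$ and keep $\|x-y\|_\infty$ close to the value dictated by $x\in B_E$; once that is in hand, combining the two regions yields $\|x+y_i-y\|\le 1+\eps$ and the $M$-ideal property follows from Theorem~\ref{1.1}.
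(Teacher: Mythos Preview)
Your setup is correct and you have correctly identified the real obstacle: if $y\in B_{E_0}$ approximates $x$ on the compact set $K$ outside of which the $y_i$ are small, then on the ``intermediate'' region $K'\setminus K$ (where $K'$ is a compact set off of which $y$ itself is small) you only get $\|x(t)+y_i(t)-y(t)\|_Y\le 2+\eps/2$. But your resolution of this obstacle is not a proof; it is a restatement of what you would like to be true. There is no mechanism in the hypotheses that forces a \emph{single} $y\in B_{E_0}$ to satisfy both $\sup_{t\in K}\|x(t)-y(t)\|_Y<\eps$ and $\|x-y\|_\infty\le 1+\eps$. The density is only for the compact-open topology, and $x$ need not vanish at infinity, so no enlargement of $K$ will make $x$ ``essentially concentrated'' on a compact set. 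Your final paragraph invokes ``the structure of $E$ and the approximating net'' without saying what structure or what net, and in fact no such argument goes through for a single approximant.

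The paper overcomes exactly this obstacle with an averaging trick that you have missed. One builds inductively a nested sequence of compact sets $K_0\subset K_1\subset\cdots$ and approximants $h_j\in B_{E_0}$ such that $h_j$ is $\eps$-close to $x$ on $K_{j-1}$ and $\eps$-small off $K_j$. Then one takes $y=\frac1r\sum_{j=1}^r h_j$ with $r>1/\eps$. For a fixed $t$, say $t\in K_u\setminus K_{u-1}$, all but one of the summands either approximate $x(t)$ well (those with $j>u$) or are themselves small (those with $j<u$); only the single term $h_u(t)$ is uncontrolled, and its contribution $2/r<2\eps$ is negligible. This Ces\`aro-type averaging is the missing idea, and without it the proof does not close.
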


\begin{proof}
We shall verify the 3-ball property from Theorem~\ref{1.1}. So let $\eps>0$, $f\in B_E$ and $g_1, g_2, g_3\in B_{E_0}$. In the following we shall use the notation $\|h\|_S = \sup_{t\in S} \|h(t)\|_Y$ for a function on $L$ and $S^c=L\setminus S$ for the complement of~$S$. 

In the first step take a compact subset $K_0\subset L$ such that all $\|g_i\|_{K_0^c}\le \eps$. By the density assumption there is some $h_1\in B_{E_0}$ such that $\|f-h_1\|_{K_0}\le\eps$. Pick a compact set $K_1\supset K_0$ such that $\|h_1\|_{K_1^c}\le\eps$; obviously  $\|g_i\|_{K_1^c}\le \eps$ as well. 

In the nest step pick some function $h_2\in B_{E_0}$ such that $\|f-h_2\|_{K_1}\le\eps$ and 
a compact set $K_2\supset K_1$ such that $\|h_2\|_{K_2^c}\le\eps$; clearly, still $\|h_1\|_{K_2^c}\le\eps$. Inductively, one can find functions $h_j\in B_{E_0}$ and compact sets $K_0\subset K_1 \subset K_2 \subset \dots \subset L$ such that 
\beq\label{eqA}
\|f-h_j\|_{K_u} \le \eps \qquad \text{for }j>u
\eeq
and 
\beq\label{eqB}
\|h_j\|_{K_u^c} \le \eps \qquad \text{for }j\le u.
\eeq

Let $r>1/\eps$ and 
$$
g= \frac 1r \sum_{j=1}^r h_j \in B_{E_0}.
$$
We shall verify that 
\beq\label{eq2.3}
\|f+g_i-g\|\le 1+3\eps \qquad (i=1,2,3),
\eeq
which implies the 3-ball property.

Indeed, if $t\in K_0$, then $\|f(t)-h_j(t)\|_Y \le \eps$ for all~$j$ and $\|g_i(t)\|_Y\le 1$ for all~$i$, hence
$$
\|f(t)+g_i(t)-g(t)\|_Y \le \|g_i(t)\|_Y + \|f(t)-g(t)\|_Y \le 1+\eps.
$$
Next, suppose $t\in K_u\setminus K_{u-1}$ for some $u\in\{1,\dots,r-1\}$. Then we have
$\|f(t)-h_j(t)\|_Y \le 1+\eps$ for $j<u$ by (\ref{eqB}) and the triangle inequality and
$\|f(t)-h_j(t)\|_Y \le \eps$ for $j >u$ by (\ref{eqA}), and trivially 
$\|f(t)-h_u(t)\|_Y \le 2$. This shows for such $t$
\begin{align*}
\|f(t) + g_i(t) -g(t)\|_Y 
&\le 
\|f(t) -g(t)\|_Y + \| g_i(t) \|_Y \\
& \le 
\frac1r \bigl( (u-1)(1+\eps) +2 + (r-u)\eps \bigr) + \eps  \\
& = 
\frac {u+1}r + \Bigl( \frac {r-1}r +1 \Bigr)\eps \le 1+2\eps
\end{align*}
Finally, if $t\notin K_{r-1}$, then $\|h_j(t)\|_Y \le \eps$ for $j\le r-1$ by (\ref{eqB}) and $\|h_r(t)\|_Y \le 1$ so that 
\begin{align*}
\|f(t) + g_i(t) -g(t)\|_Y 
&\le 
\|f(t)\|_Y + \| g_i(t)\|_Y +\|g(t)\|_Y \\
& \le 
1 + \eps + \frac1r \bigl( (r-1)\eps + 1  \bigr) 
\le 1+3 \eps  .
\end{align*}

Altogether we have proved (\ref{eq2.3}).
\end{proof}


\section{The Perfekt construction}
\label{sec3}

We now recall the setup of Perfekt's approach; actually, we have removed some unnecessary restrictions. Let $X$ be a reflexive space and $Y$ be any Banach space. Consider a subset $\calL \subset L(X,Y)$ (the space of bounded linear operators from $X$ into $Y$) and equip it with a locally compact Hausdorff topology $\tau$ that is finer than the strong operator topology sot. Hence, for each $x\in X$, the mapping $\hat x\dopu \calL \to Y$, $\hat x(T)=Tx$, is continuous on $(\calL, \tau)$. Now define the vector subspace 
$$
E= \Bigl\{x\in X\dopu \sup_{T\in \calL} \|Tx\|_Y < \infty \Bigr\}
$$
of $X$. By definition, $\hat x\in C^b(\calL, Y)$ for $x\in E$. We further assume that $\calL$ is rich enough to make $x\mapsto \hat x$ injective and consequently $x\mapsto \|\hat x\|_\infty = \sup_{T\in\calL} \|Tx\|_Y$ a norm on $E$. In this situation $\|x\|_\infty := \|\hat x\|_\infty$ is a norm on $E$  which makes $E$ isometric to a subspace of $C^b(\calL, Y)$; henceforth we shall consider $E\subset C^b(\calL,Y)$ in a canonical way. We also assume that
$E$ is closed in $C^b(\calL, Y)$ so that both $E$ and $E_0:= E\cap C_0(\calL, Y)$ are Banach spaces. Then  the canonical inclusion mapping $x\mapsto x$ from $(E, \norm_\infty)$ to $(X, \norm_X)$ is continuous by the closed graph theorem.
These assumptions will be in place throughout the whole section. 

We finally consider the following crucial density assumption.
\begsta
\item[(A)]
$B_{E_0}$ is dense in $B_E$ for the topology generated by the norm $\norm_X$ of~$X$.
\endsta

Under these assumptions we will now give a new proof of Perfekt's biduality theorem \cite[Th.~2.2]{P1}. At several points, the argument below is the same as in \cite{P1}, but it differs at a decisive juncture so that we can dispense with some assumptions in \cite{P1}. 
Instead of using Singer's theorem representing  the dual of $C_0(\calL,Y)$ by vector measures our argument relies on the $C_0(\calL)$-module structure of that space. 

\begin{theo}\label{3.1}
The space $E$ is canonically isometric to $E_0^{**} $ provided assumption~\mbox{\rm{(A)}} is valid. 
\end{theo}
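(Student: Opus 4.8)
The plan is to derive the isometry directly from the reflexivity of $X$, bypassing any description of the dual of $C_0(\calL,Y)$. The central object is the second adjoint of the formal inclusion $\iota\colon E_0\hookrightarrow X$, which is bounded because the inclusion $(E,\|\cdot\|_\infty)\hookrightarrow(X,\|\cdot\|_X)$ is continuous, as noted above. Since $X$ is reflexive I identify $X^{**}=X$, so that $\iota^{**}$ becomes a bounded operator $\iota^{**}\colon E_0^{**}\to X$. My claim is that $\iota^{**}$ is exactly the asserted canonical isometry of $E_0^{**}$ onto $E$.

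A preliminary observation drives everything: $B_E$ is weakly compact in $X$. Indeed $B_E=\bigcap_{T\in\calL}\{x\in X\colon\|Tx\|_Y\le 1\}$ is an intersection of norm-closed convex sets, hence convex and weakly closed by Mazur's theorem, and it is norm-bounded in $X$ because $E\hookrightarrow X$ is continuous; reflexivity then yields weak compactness. In particular $B_E$ is $\|\cdot\|_X$-closed, so assumption~(A) says exactly that $B_E$ is the $\|\cdot\|_X$-closure of $B_{E_0}$, and consequently also its weak closure, $\overline{B_{E_0}}^{w}=B_E$.

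I would then check three properties of $\iota^{**}$. First, by naturality $\iota^{**}\circ\beta_{E_0}=\beta_X\circ\iota$, so on $\beta_{E_0}(E_0)$ the map $\iota^{**}$ is just the inclusion $E_0\hookrightarrow X$; thus $\iota^{**}$ extends the canonical embedding and the identification is canonical. Second, $\iota^{**}$ is injective: its kernel is the annihilator in $E_0^{**}$ of $\iota^*(X^*)\subset E_0^*$, and since $X^*$ separates the points of $X\supset E_0$, this subspace is weak$^*$-dense in $E_0^*$, forcing the kernel to be $\{0\}$. Third --- and this is the crux --- $\iota^{**}(B_{E_0^{**}})=B_E$. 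For this I combine Goldstine's theorem, $B_{E_0^{**}}=\overline{\beta_{E_0}(B_{E_0})}^{w^*}$, with the fact that $\iota^{**}$ is weak$^*$-to-weak continuous (an adjoint is weak$^*$--weak$^*$ continuous, and on the reflexive space $X^{**}=X$ the weak$^*$ topology is the weak topology). Continuity and weak$^*$-compactness give $\iota^{**}(B_{E_0^{**}})=\iota^{**}\bigl(\overline{\beta_{E_0}(B_{E_0})}^{w^*}\bigr)\subset\overline{B_{E_0}}^{w}=B_E$, while weak compactness of the image makes it weakly closed and containing $B_{E_0}$, hence containing $\overline{B_{E_0}}^{w}=B_E$.

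Granting the ball identity together with injectivity, a routine scaling argument shows that $\iota^{**}$ carries $E_0^{**}$ isometrically onto $(E,\|\cdot\|_\infty)$: the unit ball is mapped exactly onto the unit ball and injectivity prevents any collapse of the norm. The main obstacle is precisely this ball identity; its delicate points are to upgrade the norm density in~(A) to \emph{weak} density in $X$, and to know that the continuous image of the weak$^*$-compact ball $B_{E_0^{**}}$ is weakly closed --- both of which rest on the weak compactness of $B_E$, i.e.\ on the reflexivity of $X$. Everything else is formal. Finally, I observe that this biduality meshes with the $M$-ideal picture: since~(A) forces density of $B_{E_0}$ in $B_E$ for uniform convergence on $\tau$-compact sets (on such a set the operators are uniformly bounded by the uniform boundedness principle, so $\|\cdot\|_X$-convergence entails uniform convergence), Theorem~\ref{2.1} shows that $E_0$ is an $M$-ideal in $E=E_0^{**}$, i.e.\ that $E_0$ is $M$-embedded.
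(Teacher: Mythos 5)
Your overall strategy---realising the canonical map as $\iota^{**}\colon E_0^{**}\to X^{**}=X$ and exploiting the weak compactness of $B_E$ in the reflexive space $X$---is sound, and your proof of the ball identity $\iota^{**}(B_{E_0^{**}})=B_E$ via Goldstine, Mazur and the weak$^*$-to-weak continuity of the adjoint is correct; it nicely compresses the paper's two separate steps ($\ran I\subset E$ with $I$ contractive, and the surjectivity and contractivity of $\Lambda$ via assumption (A)) into a single compactness argument. However, there is a genuine gap at the injectivity step, and it occurs exactly at what the paper calls the decisive juncture. The kernel of $\iota^{**}=(\iota^*)^*$ is the annihilator of $\ran\iota^*$ taken in $E_0^{**}$, and this annihilator is trivial if and only if $\ran\iota^*$ is \emph{norm}-dense in $E_0^*$. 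The fact that $X^*$ separates the points of $E_0$ only gives weak$^*$-density of $\ran\iota^*$ in $E_0^*$ (equivalently, that its pre-annihilator in $E_0$ is $\{0\}$), and a weak$^*$-dense subspace can have a very large annihilator in the bidual: $c_0$ is weak$^*$-dense in $\ell_\infty=\ell_1^*$, yet its annihilator in $\ell_1^{**}$ is the whole space of singular functionals. So your argument does not rule out that $\iota^{**}$ is merely a metric surjection of $B_{E_0^{**}}$ onto $B_E$ with nontrivial kernel, in which case $E$ would be a proper quotient of $E_0^{**}$ rather than isometric to it.

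This is precisely why the paper devotes the first (and longest) part of its proof to showing that $\ran J=\ran\iota^*$ is norm-dense in $E_0^*$: given $\ell^0\in E_0^*$, one takes a Hahn--Banach extension $\ell\in C_0(\calL,Y)^*$, shows that each truncation $\ell_\varphi(f)=\ell(\varphi f)$, for $\varphi\in C_0(\calL)$ with compact support, restricts on $E_0$ to an element of $\ran J$ (using the uniform boundedness principle on $\tau$-compact sets), and then uses the $C_0(\calL)$-module structure of $C_0(\calL,Y)$---an exhaustion argument with functions whose ``new parts'' are disjointly supported---to produce $\varphi$ with $\|\ell-\ell_\varphi\|\le\eps$. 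Your proof becomes complete once you insert this norm-density lemma (or an equivalent substitute); without it, the isometry cannot be concluded from the ball identity alone.
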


Before proceeding to the proof let us explain which isomorphism is meant in the theorem and what makes it canonical. Let us introduce, as in \cite{P1}, the operators $i_0\dopu E_0\to X$, $i_0(x)=x$; $J\dopu X^*\to E_0^*$, $J=i_0^*$ (i.e., $Jx^*= \rest{x^*}{E_0}$); $I\dopu E_0^{**}\to X$, $I=J^*$. Then the claim is that $\ran I=E$ and $I$ is an isometry for $\norm_\iy$. Note that 
$I\beta_{E_0} = \Id_{E_0\to E}$ (with $\beta_{E_0}$ the canonical map from $E_0$ into its bidual), which makes $I$ canonical.  

\begin{proof}
In the first step we prove that $\ran J$ is norm dense in $E_0^*$. Let $\ell^0\in E_0^*$ and consider a Hahn-Banach extension $\ell\in C_0(\calL, Y)^*$. Let $\varphi\in C_0(\calL)$ with compact support $\calK$ and $0\le \varphi\le 1$; then the functional 
$$
\ell_\varphi\in C_0(\calL, Y)^*, \quad \ell_\varphi(f)= \ell(\varphi f)
$$
is well defined. If $x\in E_0$ we have
$$
|\ell_\varphi(x)| = |\ell(\varphi x)| \le \|\ell\| \cdot \|\varphi x\|_\iy 
$$
and
$$
\sup_{T\in \calL} \|\varphi(T) Tx\|_Y = \sup_{T\in \calK} \|\varphi(T) Tx\|_Y \le 
\sup_{T\in \calK} \|T\| \cdot \|x\|_X.
$$
Now $\calK$ is $\tau$-compact and hence sot-compact; as a result it is pointwise bounded and so $\sup_{T\in \calK} \|T\|<\iy$ by the uniform boundedness principle. This shows that $\ell_\varphi$ is continuous on $E_0$ for the norm of $X$ and therefore the restriction of some $x^*\in X^*$ to~$E_0$; consequently $\rest{\ell_\varphi}{E_0}\in \ran J$.

Let us now prove:
\begsta
\item[$\bullet$] \em
For every $\eps>0$ there exists $\varphi\in C_0(\calL)$ with compact support, $0\le \varphi\le 1$, such that $\|\ell- \ell_\varphi\|_{C_0(\calL, Y)^*} \le\eps$.
\endsta 
Indeed, assume that for some $\eps_0>0$ there is no such $\varphi$.  Then for all such $\varphi$
$$
\|\ell_{1-\varphi}\|_{C_0(\calL, Y)^*} =
\|\ell- \ell_\varphi\|_{C_0(\calL, Y)^*} >\eps_0.
$$
Let us start with $\varphi_1=0$; so there is some $f_1\in C_0(\calL, Y)$, $\|f_1\|_\iy =1$, with $|\ell(({1-\varphi_1})f_1)|> \eps_0$; upon replacing $f_1$ with $-f_1$ if necessary we even have
$$
\ell(({1-\varphi_1})f_1)> \eps_0.
$$
Since the functions of compact support are dense in $C_0(\calL,Y)$ we may as well assume that $\supp f_1$ is compact. Next consider a function $\varphi_2\in C_0(\calL)$ with compact support, $0\le \varphi_2\le 1$, such that $\varphi_2(T)=1$ on $\supp f_1$. By our assumption there is some $f_2\in C_0(\calL, Y)$ of compact support, $\|f_2\|_\iy=1$, such that 
$$
\ell(({1-\varphi_2})f_2)> \eps_0.
$$
Inductively we find $\varphi_j\in C_0(\calL)$ between $0$ and $1$ and
$f_j\in C_0(\calL, Y)$ of norm~$1$, both with compact support, such that $\varphi_j(T)=1$ on $\supp f_1 \cup \dots \cup \supp f_{j-1}$ and
$$
\ell(({1-\varphi_j})f_j)> \eps_0.
$$
By construction, if $(1-\varphi_j)(T) f_j(T)\neq0$, then $(1-\varphi_i)(T) f_i(T) =0$ for all $i< j$; consequently for all $r\in \N$
$$
\Bigl\| \sum_{j=1}^r (1-\varphi_j) f_j \Bigr\| _\iy \le 1,
$$
but 
$$
\ell\Bigl( \sum_{j=1}^r (1-\varphi_j) f_j \Bigr) > r\eps_0;
$$
this is a contradiction if $r\ge \|\ell\|/\eps_0$. 
Thus, the above claim is proved. 

Since the estimate in the above claim is a fortiori true for the restrictions of the functionals to $E_0$, we get that, given $\eps>0$,
$$
\|\ell^0 -\rest{\ell_\varphi}{E_0}\| \le\eps.
$$
Together with the first part of the proof this implies that the range of $J$ is dense in~$E_0^*$. 

Now that we know that $J$ has dense range, it is clear that $I$ is injective. Let us show that $\ran I \subset E$. Let $e_0^{**}\in E_0^{**}$; we wish to prove that the element $Ie_0^{**}\in X$ satisfies
$$
\sup_{T\in \calL} \|T(Ie_0^{**})\|_Y<\infty.
$$
Without loss of generality we can assume $\|e_0^{**}\|=1$. Then there is a net $(e_\alpha)$ in $B_{E_0}$ such that $e_\alpha\to e_0^{**}$ for the topology $\sigma(E_0^{**}, E_0^*)$. Since $I=J^*$ is weak$^*$ continuous, it follows $Ie_\alpha \to Ie_0^{**}$ for the topology $\sigma(X,X^*)$. Since $I \beta_{e_0} e_\alpha  =e_\alpha$, we can conclude for each $T\in \calL$ and $y^*\in Y^*$
$$
(T^*y^*)e_\alpha \to (T^*y^*)(Ie_0^{**}),
$$
that is
$$
y^*(Te_\alpha) \to y^*(TIe_0^{**}).
$$
Now
$$
|y^*(Te_\alpha)| \le \|y^*\| \, \|Te_\alpha\|_Y \le \|y^*\| \, \|e_\alpha\|_\infty \le \|y^*\|
$$
and therefore 
$$
|y^*(TIe_0^{**})| \le \|y^*\|
$$
which shows
$$
\|T(Ie_0^{**})\|_Y = \sup_{\|y^*\|\le1}  |y^*(TIe_0^{**})| \le 1
$$
and thus $\|Ie_0^{**}\|_\iy \le 1$. This proves that $Ie_0^{**}\in E$ and that $I$ is a contraction as an operator from $E_0^{**}$ to $(E,\norm_\iy)$.

Finally consider the linear mapping $\Lambda\dopu E\to E_0^{**}$, $(\Lambda e)(\rest{x^*}{E_0}) = x^*(e)$, which is well defined since $\ran J$ is dense in~$E_0^*$. One has, given $e\in E$, 
$$
x^*(e)= (\Lambda e)(Jx^*) =(I\Lambda e)(x^*) \qquad \text{for all }x^*\in X^*,
$$
hence $e=I\Lambda e$; therefore $I$ is surjective and
$$
\|e\| = \|I\Lambda e\| \le \|\Lambda e\|.
$$

To complete the proof of the theorem it remains to show that $\Lambda$ is contractive, that is 
$$
x^*\in X^*,\ |x^*(e_0)|\le 1 \text{ for all }e_0\in B_{E_0} 
\quad \Rightarrow \quad
|x^*(e)|\le 1 \text{ for all }e\in B_{E}.
$$ 
It is here that the assumption (A) enters. Let $x^*\in X^*$ as above and $\|e\|_\iy=1$; pick a sequence $(e_n)$ in $B_{E_0}$ satisfying $\|e_n-e\|_X\to 0$, in particular $x^*(e_n)\to x^*(e)$. Consequently $|x^*(e)|\le 1$, as requested.
\end{proof}

\begin{cor}\label{3.2}
Under the assumptions of Theorem~\ref{3.1}, $E_0$ is an $M$-ideal in $E$ and hence an $M$-embedded space.
\end{cor}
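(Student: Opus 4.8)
The plan is to establish the two assertions in turn, reducing each to a result already available. For the $M$-ideal property I would invoke Theorem~\ref{2.1} with $L=\calL$, and for the ensuing claim that $E_0$ is $M$-embedded I would feed in the biduality $E\cong E_0^{**}$ furnished by Theorem~\ref{3.1}.

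To apply Theorem~\ref{2.1} I first note that its structural hypotheses are already part of the standing assumptions of this section: $\calL$ carries a locally compact Hausdorff topology, $E$ is a closed subspace of $C^b(\calL,Y)$, and $E_0=E\cap C_0(\calL,Y)$. Hence the only thing left to check is that $B_{E_0}$ is dense in $B_E$ for the topology of uniform convergence on compact subsets of~$\calL$. What assumption~(A) supplies instead is density for the topology of the norm $\norm_X$, so the crux of the argument is to compare these two topologies on~$B_E$.

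The comparison I have in mind is the following, and it is the one genuinely technical point, albeit a mild one. Fix a $\tau$-compact set $\calK\subset\calL$. Exactly as in the proof of Theorem~\ref{3.1}, $\calK$ is sot-compact, hence pointwise bounded, so $\sup_{T\in\calK}\|T\|<\iy$ by the uniform boundedness principle. For $e,e'\in E$ this yields $\|T(e-e')\|_Y\le\|T\|\,\|e-e'\|_X$ for $T\in\calK$, whence $\|\hat e-\hat{e}'\|_\calK\le\bigl(\sup_{T\in\calK}\|T\|\bigr)\|e-e'\|_X$. Thus $\norm_X$-convergence forces uniform convergence on every compact subset of~$\calL$; that is, on $B_E$ the $\norm_X$-topology is finer than the topology of uniform convergence on compact sets. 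A set that is dense for the finer topology is dense for the coarser one, so (A) delivers the density demanded by Theorem~\ref{2.1}, and that theorem gives at once that $E_0$ is an $M$-ideal in~$E$.

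For the final assertion I would argue that an $M$-ideal in its own bidual is $M$-embedded. Theorem~\ref{3.1} provides an isometry $I$ of $E_0^{**}$ onto $(E,\norm_\iy)$ with $I\beta_{E_0}=\Id_{E_0\to E}$; under this identification the inclusion $E_0\hookrightarrow E$ becomes the canonical embedding $\beta_{E_0}$, so the $M$-ideal $E_0\subset E$ is realised as an $M$-ideal of $E_0$ in $E_0^{**}$. Since the associated $M$-ideal projection on $E_0^{***}$ is unique and has kernel $E_0^\bot$, it must coincide with the restriction map $x^{***}\mapsto\rest{x^{***}}{E_0}$, which is precisely the condition for $E_0$ to be $M$-embedded. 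Beyond the topology comparison above, which is the only step requiring a moment's care, I expect nothing here but bookkeeping.
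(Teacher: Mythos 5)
Your proposal is correct and follows essentially the same route as the paper: both verify the density hypothesis of Theorem~\ref{2.1} by using the uniform boundedness principle to bound $\sup_{T\in\calK}\|T\|$ on a compact $\calK\subset\calL$, thereby showing that $\norm_X$-convergence from assumption~(A) implies uniform convergence on compact subsets of~$\calL$. The concluding remarks on the $M$-embedding via the canonical isometry of Theorem~\ref{3.1} likewise match the paper's (briefer) treatment.
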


\begin{proof}
We just have to verify the density condition of Theorem~\ref{2.1}, that is, $B_{E_0}$ is dense in $B_E$ for the topology of uniform convergence on compact subsets of~$\calL$. 

Let $e\in E$, $\|e\|_\iy=1$. By assumption (A) there are $(e_n)$ in $B_{E_0}$ such that $\|e_n-e\|_X\to 0$; as a result $\|Te_n-Te\|_Y \le \|T\|\,\|e_n-e\|_X \to 0$ for each $T\in \calL$. This says that the associated functions $\hat e_n$ on $\calL$ converge pointwise on~$\calL$. But we have already observed in the previous proof that a compact subset of $\calL$ is bounded for the norm of $L(X,Y)$ by the uniform boundedness principle. Hence the convergence is uniform on compact subsets of~$\calL$.
\end{proof}

There is a limit to this method of detecting $M$-embedded spaces since the simplest nonseparable $M$-embedded space, $c_0(\Gamma)$ for some uncountable~$\Gamma$, cannot be injected into a reflexive space. (This is a known result; here is a sketch of the proof: Suppose $j_0\dopu c_0(\Gamma)\to X$ is a continuous injection into a reflexive space. Then $j_0^*$ is weakly compact with pointwise dense range in $\ell_1(\Gamma)$, which has the Schur property; thus $j_0^*$ is compact and hence has norm-separable range. This means that $\ran j_0^* \subset \ell_1(\Gamma_0)$ for some countable $\Gamma_0\subset \Gamma$, a contradiction.)


\section{Lipschitz spaces}
\label{sec4}

Let $(M,d)$ be a metric space with a distinguished point $p\in M$ (a ``pointed'' metric space). We denote by $\LipM$ the Banach space of all real-valued Lipschitz functions on $M$ vanishing at $p$ with the Lipschitz constant as its norm:
$$
\|F\|_\Lip = \sup _{s\neq t} \frac {|F(s)-F(t)|}{d(s,t)}.
$$
We also consider the ``little'' Lipschitz space
$$
\lipM = \Bigl\{f\in \LipM\dopu \lim_{d(s,t)\to 0}  \frac {|f(s)-f(t)|}{d(s,t)} =0 \Bigr\}.
$$ 
This is a closed subspace of $\LipM$ which might however reduce to $\{0\}$, e.g., for $M=[0,1]$ with the Euclidean metric. 
For a H\"older metric on $[0,1]$, $d_\alpha(s,t)= |s-t|^\alpha$ where $0<\alpha<1$, the subspace $\lip_0([0,1], d_\alpha)$ is nontrivial, indeed $\Lip_0([0,1],d_\beta) \subset \lip_0([0,1], d_\alpha)$ for $\alpha<\beta\le1$.
The authoritative source about Lipschitz spaces is N.~Weaver's monograph~\cite{Wea}.

We shall now consider compact (pointed) metric spaces with a countable dense subset $P$ having the following property:

\begsta
\item[(B)]
$B_{\lipM}$ is dense in $B_{\LipM}$ for the topology of pointwise convergence on~$P$.
\endsta
Since $B_{\LipM}$ is equicontinuous this is the same as saying:

\begsta
\item[(B$'$)]
$B_{\lipM}$ is dense in $B_{\LipM}$ for the topology of uniform convergence on~$M$.
\endsta

\begin{prop}\label{4.1}
If $M$ is a compact pointed metric space with {\rm (B)}, then $\lipM$ is an $M$-ideal in $\LipM$.
\end{prop}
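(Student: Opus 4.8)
The plan is to deduce this from Theorem~\ref{2.1} after realising the pair $(\lipM, \LipM)$ inside a suitable $C^b(L)$ by means of a de~Leeuw-type map. First I would set
\[
L = \{(s,t)\in M\times M\dopu s\neq t\},
\]
which, since $M$ is compact, is an open and hence locally compact Hausdorff subset of the compact space $M\times M$. Taking $Y=\R$, define $\Phi\dopu \LipM\to C^b(L)$ by
\[
\Phi F(s,t) = \frac{F(s)-F(t)}{d(s,t)}.
\]
Each $\Phi F$ is continuous on $L$, because $d$ is continuous and strictly positive there, and is bounded by $\|F\|_\Lip$; moreover $\sup_{(s,t)\in L}|\Phi F(s,t)| = \|F\|_\Lip$, so $\Phi$ is a linear isometry onto a subspace $E:=\Phi(\LipM)$. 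As $\LipM$ is complete, $E$ is complete and therefore closed in $C^b(L)$, as Theorem~\ref{2.1} requires.

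Next I would identify $E_0:=E\cap C_0(L)$. The decisive observation is that a subset of $L$ which is closed in $L$ is compact exactly when it is bounded away from the diagonal, i.e.\ contained in some $\{(s,t)\dopu d(s,t)\ge\delta\}$ (here compactness of $M\times M$ is used). Consequently $\Phi F\in C_0(L)$ means that for every $\eps>0$ the set $\{(s,t)\dopu |\Phi F(s,t)|\ge\eps\}$ is bounded away from the diagonal, which is precisely the requirement $\lim_{d(s,t)\to 0}\frac{|F(s)-F(t)|}{d(s,t)}=0$ defining $\lipM$. Hence $E_0=\Phi(\lipM)$, and $\Phi$ carries $(\lipM,\LipM)$ isometrically onto the pair $(E_0,E)$ to which Theorem~\ref{2.1} is meant to apply.

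It then remains to derive the density hypothesis of Theorem~\ref{2.1} from {\rm (B$'$)}, namely that $B_{E_0}$ is dense in $B_E$ for uniform convergence on compact subsets of $L$. Given $F\in B_{\LipM}$, a compact set $K\subset L$, and $\eps>0$, compactness yields $\delta>0$ with $d(s,t)\ge\delta$ on $K$, whence for any $f$
\[
\sup_{(s,t)\in K}\bigl|\Phi F(s,t)-\Phi f(s,t)\bigr| \le \frac{2}{\delta}\,\sup_{x\in M}|F(x)-f(x)|.
\]
By {\rm (B$'$)} one may choose $f\in B_{\lipM}$ with $\sup_{x\in M}|F(x)-f(x)|\le \eps\delta/2$, giving $\sup_K|\Phi F-\Phi f|\le\eps$ with $\Phi f\in B_{E_0}$. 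Theorem~\ref{2.1} then asserts that $E_0$ is an $M$-ideal in $E$, and transporting this back along the isometry $\Phi$ shows that $\lipM$ is an $M$-ideal in $\LipM$.

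The computational content is light; the step I expect to require the most care is the identification $E_0=\Phi(\lipM)$, which rests entirely on matching the $C_0(L)$-condition (relative compactness in $L$, equivalently being bounded away from the diagonal) with the defining little-Lipschitz condition. Once that dictionary is in place, the passage from {\rm (B$'$)} to the compact-open density hypothesis is just the elementary estimate above, the factor $1/\delta$ being harmless precisely because compact subsets of $L$ avoid the diagonal uniformly.
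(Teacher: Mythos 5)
Your proposal is correct and follows essentially the same route as the paper: the de~Leeuw embedding $\Phi$ into $C^b(L)$ with $L=(M\times M)\setminus\Delta_M$, the identification of $\Phi(\lipM)$ with $\Phi(\LipM)\cap C_0(L)$, and the $2/\delta$ estimate turning (B$'$) into the compact-open density hypothesis of Theorem~\ref{2.1}. Your extra care in spelling out that compact subsets of $L$ are exactly the closed sets bounded away from the diagonal is a welcome elaboration of a step the paper states without detail.
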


\begin{proof}
Let $\Delta_M=\{(t,t)\dopu t\in M\}$ be the diagonal in $M\times M$ and $L= (M\times M)\setminus \Delta_M$; this is a locally compact space. We further associate to a function $F$ on $M$ a new function $\Phi F$ on $L$ defined by 
$$
(\Phi F)(s,t)= \frac {|F(s)-F(t)|}{d(s,t)};
$$
this is the approach of K.~de Leeuw's classical paper \cite{deL}. Note that $\Phi$ is a linear isometry from $\LipM$ into $C^b(L)$ that takes $\lipM$ into $C_0(L)$. Let $\Lambda= \Phi(\LipM)$ and $\lambda= \Phi(\lipM)$; then $\lambda = \Lambda\cap C_0(L)$. 

To prove that $\lambda $ is an $M$-ideal in $\Lambda$ (and thus $\lipM$ is an $M$-ideal in $\LipM$) it is enough, by Theorem~\ref{2.1},  to show that $B_\lambda$ is dense in $B_\Lambda$ for the topology of uniform convergence on compact subsets of~$L$. If $K\subset L$ is compact, then $\inf\{d(s,t)\dopu (s,t)\in K\} =:\delta >0$. Let $F\in B_{\LipM}$; by (B) (or rather~(B$'$)) there are $f_n\in B_{\lipM}$ such that $\|f_n-F\|_\iy\to 0$; hence for $(s,t)\in K$
\bea
|(\Phi f_n) (s,t) - (\Phi F)(s,t)| &\le 
\frac1\delta |(f_n(s)-f_n(t)) - (F(s)-F(t))| \\
& \le
\frac1\delta ( |f_n(s)-F(s)| + |f_n(t)-F(t)| ) \\
& \le 
\frac2\delta \|f_n-F\|_\iy \to 0
\end{align*}
so that $\Phi f_n \to \Phi F$ uniformly on $K$.

This completes the proof of the proposition.
\end{proof}

We now come to the biduality theorem, originally due to N.~Weaver in \cite{Wea96} who was the first to handle $\lip_0$-spaces and even covered certain noncompact spaces. In \cite{P1} Perfekt considered the case of a H\"older metric on a compact subset  $M\subset \R^n$ and showed that $\lipM^{**} \cong \LipM$ by means of his method using a Besov space as the reflexive space~$X$. He asked for a proof along these lines for a general compact metric space; this will be accomplished in the proof of the next theorem. 

\begin{theo}\label{4.2}
Let $M$ be a compact pointed metric space satisfying {\rm (B)} above. Then $\lipM^{**} \cong \LipM$.
\end{theo}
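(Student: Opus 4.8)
The plan is to present the pair $(\lipM,\LipM)$ as a concrete instance of the construction of Section~\ref{sec3}, so that Theorem~\ref{3.1} applies and yields the biduality directly (and, via Corollary~\ref{3.2}, re-proves Proposition~\ref{4.1}). I retain the de~Leeuw picture from the proof of Proposition~\ref{4.1}: I put $L=(M\times M)\setminus\Delta_M$, which is locally compact, take $Y=\R$, and let $\calL=\{T_{(s,t)}\dopu (s,t)\in L\}$ be the family of the \emph{linear} functionals $T_{(s,t)}F=(F(s)-F(t))/d(s,t)$ (the signed variant of $\Phi$, which is again an isometry for $\norm_\Lip$), endowed with the topology $\tau$ that makes $(s,t)\mapsto T_{(s,t)}$ a homeomorphism of $L$ onto $\calL$; thus $(\calL,\tau)$ is locally compact Hausdorff. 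The one ingredient not yet available is the \emph{reflexive} space $X$ into which $\LipM$ must be injected, and producing it is the heart of the matter.

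To build $X$ I would factor a compact inclusion. Since $M$ is compact, $B_{\LipM}$ consists of equi-Lipschitz functions bounded by $\diam M$, hence is relatively compact in $C(M)$ by the Arzel\`a--Ascoli theorem; consequently the canonical inclusion $j\dopu \LipM\to C_p(M)$ into the space $C_p(M)=\{g\in C(M)\dopu g(p)=0\}$ is a compact operator. By the Davis--Figiel--Johnson--Pe\l czy\'nski factorisation I may write $j=j_2 j_1$ through a reflexive Banach space $X$, where $j_2\dopu X\to C_p(M)$ is an injective contraction, and I realise $X$ as the corresponding subspace of $C_p(M)$ carrying the finer interpolation norm $\norm_X$. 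In particular every element of $X$ is a continuous function on $M$ vanishing at $p$, so each $T_{(s,t)}$ is bounded on $X$, the function $\hat x=(s,t)\mapsto T_{(s,t)}x$ is continuous on $L$ for every $x\in X$ (whence $\tau$ is finer than the strong operator topology), and $x\mapsto\hat x$ is injective. With these choices $E=\{x\in X\dopu \sup_{(s,t)}|T_{(s,t)}x|<\iy\}$ is exactly the set of Lipschitz members of $X$, i.e.\ $\LipM$, and the Perfekt norm $\sup_{(s,t)}|T_{(s,t)}x|$ coincides with $\|x\|_\Lip$; through the de~Leeuw embedding $E$ is isometric to a closed subspace of $C^b(L)$. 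Moreover $E_0=E\cap C_0(L)$ equals $\lipM$: as noted in the proof of Proposition~\ref{4.1}, vanishing at infinity on $L$ means vanishing as $(s,t)$ approaches the diagonal, which is precisely the little-Lipschitz condition.

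It remains to verify the density assumption~(A), and this is where the quantitative side of the factorisation is used. The DFJP norm has the feature that $\norm_X^2$ is dominated by a constant multiple of $\norm_\iy$ on a fixed multiple of $j(B_{\LipM})$; equivalently, on the equi-Lipschitz set $B_{\LipM}$ the topology of $\norm_X$ agrees with that of uniform convergence. Granting this, assumption~(B$'$) delivers~(A) at once: given $F\in B_{\LipM}$ I choose $f_n\in B_{\lipM}$ with $\|f_n-F\|_\iy\to0$; then $f_n-F$ lies in $2B_{\LipM}$ and tends to $0$ uniformly, so $\|f_n-F\|_X\to0$, showing that $B_{\lipM}$ is $\norm_X$-dense in $B_{\LipM}$. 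With (A) established, Theorem~\ref{3.1} gives $E\cong E_0^{**}$ canonically, that is $\LipM\cong\lipM^{**}$.

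I expect the main obstacle to be precisely the construction of $X$ together with the verification of~(A): the reflexive norm must be weak enough that uniform convergence of equi-Lipschitz functions forces convergence in $X$, yet strong enough to keep all point evaluations, hence all $T_{(s,t)}$, bounded. Balancing these two demands is exactly what the factorisation of the \emph{compact} inclusion $\LipM\to C_p(M)$ achieves, and estimating $\norm_X$ on $j(B_{\LipM})$ is the one genuinely technical computation. The remaining points (local compactness of $\calL$, the $\tau\succ\mathrm{sot}$ property, the identifications $E=\LipM$ and $E_0=\lipM$, and closedness of $E$ in $C^b(L)$) are routine once $X$ is in hand.
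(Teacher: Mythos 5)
Your argument is correct, but it builds the reflexive space $X$ in a genuinely different way from the paper. The paper is much more pedestrian at this point: it fixes a dense sequence $(p_n)$ in $M$, takes for $X$ the weighted Hilbert space of sequences with $\sum_k |x_k|^2 2^{-k}<\iy$, sends $F$ to $x_F=(F(p_n))$, and lets $\calL$ be the countable \emph{discrete} family of difference quotients $\ell_{n,m}(x)=(x_n-x_m)/d(p_n,p_m)$; condition (A) then drops out of (B$'$) via the one-line bound $\|x_{f_n}-x_F\|_X^2\le\|f_n-F\|_\iy^2\sum_k 2^{-k}$, and the only step requiring thought is the identification $E\cap C_0(\calL)=\lipM$, which the paper obtains from the fact that a compact metric space contains only finitely many $\delta$-separated points. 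You instead keep the full de~Leeuw index set $L=(M\times M)\setminus\Delta_M$ and obtain $X$ by DFJP-factoring the compact embedding $j\dopu\LipM\to C(M)$. This does work: the closed absolutely convex hull $W$ of $j(B_{\LipM})$ is norm compact by Arzel\`a--Ascoli, so the interpolation space is reflexive, and the estimate you single out as the technical heart is true and elementary --- for $y\in 2W$ the $n$-th gauge satisfies $\|y\|_n\le\min(2^{1-n},\,2^n\|y\|_\iy)$, whence $\|y\|_X^2\le C\|y\|_\iy$, so uniform convergence inside $2B_{\LipM}$ forces $\norm_X$-convergence and (B$'$) yields (A). What your route buys is independence of the choice of a dense sequence and a template that applies verbatim whenever the big space embeds compactly into some Banach space; what it costs is the DFJP machinery (plus the small injectivity check for $(s,t)\mapsto T_{(s,t)}$ needed to transport the topology of $L$ to $\calL$), where the paper needs nothing beyond a weighted $\ell_2$-sum. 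In either case the identification $E_0=\lipM$ rests on the same observation, namely that compact subsets of the de~Leeuw domain are exactly the closed sets bounded away from the diagonal.
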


\begin{proof}
We shall set up the Perfekt scenario as follows. Let $X$ be the weighted $\ell_2$-space 
$$
X= \Bigl\{ (x_n)\dopu \sum_{k=1}^\iy |x_k|^2 2^{-k} <\iy \Bigr\}
$$
with its canonical norm. Pick a dense sequence $(p_n)$ in $M$ and consider the functionals $\ell_{n,m}\in X^*$ (so $Y=\R$) defined by
$$
\ell_{n,m}(x)= \frac{x_n-x_m}{d(p_n, p_m)}\qquad (n\neq m)
$$
and equip $\calL= \{\ell_{n,m}\dopu n,m\in\N$, $n\neq m\}$ with the discrete topology. For $F\in \LipM$ let $x_F= (F(p_n))$, then $x_F\in X$ since $F$ is bounded. Further define $E= \{x_F\dopu F\in \LipM\}$. Note that
$$
\sup_{n\neq m} |\ell_{n,m}(x_F)| = \sup_{n\neq m} \Bigl| \frac{F(p_n)-F(p_m)}{d(p_n, p_m)} \Bigr| = \|F\|_\Lip;
$$
therefore $(E, \norm_\iy)$ is isometric to $\LipM$, and $E$ is a closed subspace of $C^b(\calL)$.

Let $E_0=E\cap C_0(\calL)$. We shall argue that $x_F\in E_0$ if and only if $F\in\lipM$. Indeed, write $\alpha \calL= \calL\cup \{\iy\}$ for the Alexandrov compactification of~$\calL$. Suppose $\lim_{\ell_{n,m}\to\iy} \ell_{n,m}(x_F)=0$. Then, given $\eps>0$, there is a finite set $K \subset (\N\times \N) \setminus \Delta_\N$ such that $|\ell_{n,m}(x_F)|<\eps$ for $(n,m)\notin K$. Let $\delta= \inf\{d(p_n, p_m)\dopu (n,m)\in K\} >0$. Therefore, if $d(p_n,p_m)<\delta$, then $(n,m)\notin K$ and as a result 
$$
 \Bigl| \frac{F(p_n)-F(p_m)}{d(p_n, p_m)} \Bigr| <\eps
$$
for these $(n,m)$, which implies $F\in \lipM$ by density of $\{p_1, p_2, \dots\}$. Conversely, if $F\in \lipM$, then, if $\eps>0$ and $\ell_{(n,m)} (F)\ge\eps$, we have $d(p_n,p_m)\ge\delta $ for some $\delta>0$. In a compact space there can only be finitely many $\delta$-separated points; this proves that $\lim_{\ell_{n,m}\to \infty} \ell_{n,m}(x_F)=0$.

To conclude the proof of the theorem it is only left to verify condition~(A) from Section~\ref{sec3} and to apply Theorem~\ref{3.1}. By (B$'$) there are, given $F\in \LipM$ with $\|F\|_\Lip=1$, functions $f_n\in B_{\lipM}$ such that $\|f_n-F\|_\iy\to0$. Then
\bea
\|x_{f_n}-x_F\|_X^2 
& = 
\sum_{k=1}^\iy |f_n(p_k)-F(p_k)|^2 2^{-k} \\
&\le
\sum_{k=1}^\iy \|f_n-F\|_\iy ^2 2^{-k} = \|f_n-F\|_\iy ^2 \to 0,
\end{align*}
as required.
\end{proof}

Let $(M,d)$ be a metric space. For $0<\alpha<1$, $d^\alpha$ is a metric as well; we write $M^\alpha$ to indicate that $M$ is equipped with the metric~$d^\alpha$. (Sometimes $M^\alpha$ is called a snow-flaked version of~$M$.) The corresponding Lipschitz spaces are also called Lipschitz-H\"older spaces, in accordance with the classical notation in~$\R^n$.  We shall prove that the compact metric spaces $M^\alpha$ satisfy the condition~(B). 

\begin{prop}\label{4.3}
Let $(M,d)$ be a compact pointed metric space and $0<\alpha<1$. Then $M^\alpha$ satisfies condition~{\rm(B)}.
\end{prop}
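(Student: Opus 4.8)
The plan is to pass to the equivalent formulation (B$'$) and then, given an $F$ in the unit ball of $\Lip_0(M^\alpha)$ (i.e.\ with $\alpha$-Hölder constant at most $1$ for the metric $d^\alpha=d(\cdot,\cdot)^\alpha$), to produce explicit little-Hölder functions approximating $F$ uniformly while keeping the Hölder constant under control. The device I would use is an inf-convolution of $F$ against a concave modulus that is linear near $0$ and coincides with $r^\alpha$ at large scales. Concretely, for $n\in\N$ set
$$ \omega_n(r)=\min(nr,\,r^\alpha), \qquad f_n(s)=\inf_{u\in M}\bigl(F(u)+\omega_n(d(s,u))\bigr). $$
First I would record the elementary properties of $\omega_n$: it is continuous, increasing, vanishes at $0$, and, being the minimum of two concave functions, is concave; a concave function vanishing at $0$ is subadditive, so $\omega_n(a+b)\le\omega_n(a)+\omega_n(b)$. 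Moreover $\omega_n(r)\le r^\alpha$ and $\omega_n(r)\le nr$ for all $r$.

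The three things to verify are then the following. For the \emph{Hölder control}, I would take a minimiser $u^*$ for $s$ (the infimum is attained since $M$ is compact and the integrand is continuous) and use the triangle inequality together with subadditivity and monotonicity of $\omega_n$ to get $f_n(t)\le F(u^*)+\omega_n(d(s,t))+\omega_n(d(s,u^*))=f_n(s)+\omega_n(d(s,t))$; by symmetry $|f_n(s)-f_n(t)|\le\omega_n(d(s,t))\le d(s,t)^\alpha$, so $f_n$ has $\alpha$-Hölder constant at most $1$. For \emph{membership in the little space}, the same estimate with $\omega_n(r)\le nr$ shows that $f_n$ is $n$-Lipschitz for the original metric $d$, and since $0<\alpha<1$ this gives $|f_n(s)-f_n(t)|/d(s,t)^\alpha\le n\,d(s,t)^{1-\alpha}\to0$ as $d(s,t)\to0$, i.e.\ $f_n\in\lip_0(M^\alpha)$; this is just the inclusion $\Lip_0(M^1)\subset\lip_0(M^\alpha)$ already noted in Section~\ref{sec4}.

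For the \emph{uniform approximation}, taking $u=s$ gives $f_n\le F$, while the Hölder bound $F(u)\ge F(s)-d(s,u)^\alpha$ yields $F(s)-f_n(s)\le\sup_{r\ge0}(r^\alpha-\omega_n(r))$. Since $\omega_n(r)=r^\alpha$ for $r\ge n^{-1/(1-\alpha)}$, this supremum is attained on the linear range and a routine one-variable optimisation gives the value $(1-\alpha)(\alpha/n)^{\alpha/(1-\alpha)}$, which tends to $0$ as $n\to\infty$; hence $\|F-f_n\|_\iy\to0$. Finally I would normalise at the base point by replacing $f_n$ with $f_n-f_n(p)$: this leaves both the Hölder constant and the little-o property unchanged, makes the function vanish at $p$, and costs only $|f_n(p)|=|f_n(p)-F(p)|\le\|f_n-F\|_\iy$ in the uniform estimate. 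Thus $f_n-f_n(p)\in B_{\lip_0(M^\alpha)}$ and converges uniformly to $F$, establishing (B$'$) and hence (B).

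The only point requiring genuine care—the main obstacle if one is not careful—is forcing the $\alpha$-Hölder constant to be at most $1$ rather than merely bounded. A crude smoothing such as the ordinary Lipschitz regularisation $\inf_u(F(u)+\lambda d(s,u))$ does land in $\lip_0(M^\alpha)$, but it inflates the $\alpha$-Hölder constant at large scales (its graph contains linear pieces of slope $\lambda$, whose $\alpha$-Hölder ratio over a fixed-size interval is of order $\lambda$). The capped modulus $\omega_n$ is engineered precisely to avoid this: its subadditivity, combined with $\omega_n\le r^\alpha$, forces $|f_n(s)-f_n(t)|\le d(s,t)^\alpha$ \emph{globally}, while its linear behaviour near $0$ simultaneously forces membership in $\lip_0(M^\alpha)$.
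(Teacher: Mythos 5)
Your proof is correct, but it takes a genuinely different route from the paper. The paper reduces to the finite sets $P_n=\{p_1,\dots,p_n\}$, perturbs the exponent to some $\beta_n\in(\alpha,1)$ close enough to $\alpha$ that the $d^{\beta_n}$-Lipschitz constant of $\rest{F}{P_n}$ is still at most $1+\frac1n$, extends by McShane in the metric $d^{\beta_n}$, and renormalises; convergence is then only pointwise on $P$, which is exactly what (B) asks for. You instead regularise $F$ globally by the inf-convolution $f_n(s)=\inf_u\bigl(F(u)+\omega_n(d(s,u))\bigr)$ with the capped concave modulus $\omega_n(r)=\min(nr,r^\alpha)$. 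All the steps check out: subadditivity and monotonicity of $\omega_n$ give $|f_n(s)-f_n(t)|\le\omega_n(d(s,t))$, whence simultaneously the global bound by $d(s,t)^\alpha$ and the $n$-Lipschitz bound that places $f_n$ in $\lip_0(M^\alpha)$; the computation $\sup_{r\ge0}(r^\alpha-\omega_n(r))=(1-\alpha)(\alpha/n)^{\alpha/(1-\alpha)}$ is correct (the critical point $(\alpha/n)^{1/(1-\alpha)}$ does lie in the linear range); and the base-point normalisation costs at most another $\|f_n-F\|_\iy$. What your approach buys is uniform convergence with an explicit rate and no appeal to an extension theorem (though your $f_n$ is of course a McShane-type formula in disguise); it also correctly identifies the one real danger, namely that a naive Lipschitz regularisation $\inf_u(F(u)+\lambda d(s,u))$ would inflate the $\alpha$-H\"older constant at large scales. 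What the paper's argument buys is that it never needs uniform estimates at all: it verifies (B) literally, as pointwise convergence on $P$, and the exponent-perturbation trick on finite sets is what replaces your capping of the modulus. Two tiny cosmetic points: the word ``integrand'' should be ``the function being minimised'', and attainment of the infimum is not actually needed (an $\eps$-almost minimiser does the same job), so compactness is used only through boundedness of $F$ and of $\diam M$.
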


\begin{proof}
Let $P=\{p_1,p_2,\dots\}$ be a countable dense subset of $M$ with $p_1=p$, the base point of~$M$, and let $P_n=\{p_1,\dots,p_n\}$. Let $F\in \Lip_0(M^\alpha)$ with $\|F\|_{\Lip_0(M^\alpha)}=1$. Define a function $g_n$ on $P_n$ by $g_n(p_j)= F(p_j)$. Clearly $\|g_n\|_{\Lip_0(P_n^\alpha)}\le 1$. Choose $\beta_n\in (\alpha, 1)$ such that $\|g_n\|_{\Lip_0(P_n^{\beta_n})}\le 1 + \frac1n$ and $(\diam M)^{\beta_n-\alpha}\le 1+\frac1n$; the former is possible since $P_n$ is finite. Now apply the McShane extension theorem \cite[Th.~1.33]{Wea} and extend $g_n$ to a function $G_n\dopu M\to \R$ having the  same Lipschitz constant as $g_n$ for the metric~$d^{\beta_n}$. Finally let $f_n=G_n/(1+\frac1n)^2$.
Then $f_n\in \Lip_0(M^{\beta_n})$ and thus $f_n\in \lip_0(M^\alpha)$. Moreover, $f_n\to F$  pointwise on $P$ by construction and 
\begin{align*}
\|f_n\|_{\Lip_0(M^\alpha)} 
&\le \|f_n\|_{\Lip_0(M^{\beta_n})} (\diam M)^{\beta_n-\alpha} \\
&\le \frac{ \|G_n\|_{\Lip_0(M^{\beta_n})} }{ (1+\frac1n)^2 } \Bigl( 1+\frac1n \Bigr) \le 1.
\end{align*}
This proves condition (B) for $d^\alpha$.
\end{proof}

\begin{cor}\label{4.4}
Let $(M,d)$ be a compact pointed metric space and ${0<\alpha<1}$. Then $(\lip_0(M^\alpha))^{**}$ is canonically isometric to $\Lip_0(M^\alpha)$, and $\lip_0(M^\alpha)$ is an $M$-ideal in its bidual $\Lip_0(M^\alpha)$.
\end{cor}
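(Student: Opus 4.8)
The plan is to assemble the three results just established in this section. Proposition~\ref{4.3} supplies the density condition~(B) for the snow-flaked metric, and then Theorem~\ref{4.2} and Proposition~\ref{4.1} convert that single condition into the biduality statement and the $M$-ideal statement respectively. So the corollary is really a specialisation, and the work lies in checking that the hypotheses of those results are met by $M^\alpha$.

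The one preliminary point I would verify is that $M^\alpha$ is again a compact pointed metric space. This holds because $t\mapsto t^\alpha$ is a homeomorphism of $[0,\iy)$ onto itself, so $d^\alpha$ induces the same topology as~$d$; in particular $M^\alpha$ is compact with the same distinguished point~$p$, and a countable dense subset of $M$ remains countable and dense in $M^\alpha$. Once this is noted, the entire framework of Section~\ref{sec4} applies to $M^\alpha$ without modification.

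With that in hand I would proceed as follows. By Proposition~\ref{4.3}, the compact pointed metric space $M^\alpha$ satisfies condition~(B). Applying Theorem~\ref{4.2} to $M^\alpha$ then yields that $(\lip_0(M^\alpha))^{**}$ is canonically isometric to $\Lip_0(M^\alpha)$, and applying Proposition~\ref{4.1} gives that $\lip_0(M^\alpha)$ is an $M$-ideal in $\Lip_0(M^\alpha)$. Equivalently, the $M$-ideal assertion is already contained in Corollary~\ref{3.2}, applied to the Perfekt scenario built in the proof of Theorem~\ref{4.2}, since assumption~(A) was verified there for exactly this situation.

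I do not expect a genuine obstacle here, as all the substantive content was proved in Propositions~\ref{4.1} and~\ref{4.3} and Theorem~\ref{4.2}. The only subtlety worth flagging is the one above, namely that passing from $d$ to $d^\alpha$ preserves both compactness and the base point, which is precisely what allows the results for a general compact $M$ satisfying~(B) to carry over verbatim to $M^\alpha$.
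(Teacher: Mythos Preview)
Your proposal is correct and follows exactly the paper's own argument, which simply cites Proposition~\ref{4.1}, Theorem~\ref{4.2} and Proposition~\ref{4.3}; your added remark that $d^\alpha$ induces the same topology as $d$ (so $M^\alpha$ remains compact with the same base point and dense subset) is a harmless and helpful clarification.
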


\begin{proof}
This follows from Proposition~\ref{4.1}, Theorem~\ref{4.2} and Proposition~\ref{4.3}.
\end{proof}

It is a remarkable fact, proved by N.~Kalton \cite[Th.~6.6]{Kal}, that, for a compact metric space, $\lipM$ is always $M$-embedded since it embeds almost isometrically into~$c_0$. I do not know whether $\lipM$ is always an $M$-ideal in $\LipM$; this trivially holds if $\lipM$ is trivial.

The biduality theorem of Corollary~\ref{4.4} is originally due to N.~Weaver
\cite{Wea96}; 
in fact he gave a number of conditions that are equivalent to the validity of the biduality theorem on a compact metric space. Let us add that condition (B) is actually equivalent to $\LipM$ being the bidual of $\lipM$ under the canonical duality. Indeed, if $\lipM$ and $\LipM$ are in canonical biduality, then by Goldstine's theorem $B_{\lipM}$ is weak$^*$ dense in $B_{\LipM}$, and checking this on point evaluations $F\mapsto F(t)$, which span the predual $\mathscr{F}(M)$ of $\LipM$ (known as the Lipschitz free space, Arens-Eells space or transportation cost space), shows that (B) holds.


\end{document}